\newtheorem{thm}{Theorem} \newtheorem{lemma}{Lemma}  
\let\paragraph\subsection
\newcommand{\RR}{\mathbb{R}}
\title{On Atiyah-Singer and Atiyah-Bott for finite abstract simplicial complexes}
\author{Oliver Knill}
\date{Aug 20, 2017}
\address{Department of Mathematics \\ Harvard University \\ Cambridge, MA, 02138 }
\subjclass{58J20, 68R05, 05E45}
\keywords{Discrete differential geometry, simplicial complexes, graphs, Atiyah-Singer, Atiyah-Bott, 
connection calculus }
\begin{document}
\maketitle

\begin{abstract}
A linear or multi-linear valuation on a finite abstract simplicial complex can be expressed as an 
analytic index ${\rm dim}({\rm ker}(D))$ $-{\rm dim}({\rm ker}(D^*))$ of a differential complex $D:E \to F$. 
In the discrete, a complex $D$ can be called elliptic if a McKean-Singer spectral symmetry applies
as this implies ${\rm str}(e^{-t D^2})$ is $t$-independent. In that case, the analytic index of $D$ is 
$\chi(G,D)=\sum_k (-1)^k b_k(D)$, where $b_k$ is the $k$'th Betti number, which by Hodge
is the nullity of the $(k+1)$'th block of the Hodge operator $L=D^2$. It can also be 
written as a topological index $\sum_{v \in V} K(v)$, where $V$ is the set 
of zero-dimensional simplices in $G$ and where $K$ is an Euler type curvature defined by $G$ and $D$. 
This can be interpreted as a Atiyah-Singer type correspondence between analytic and topological index.
Examples are the de Rham differential complex for the Euler 
characteristic $\chi(G)$ or the connection differential complex for Wu characteristic $\omega_k(G)$. 
Given an endomorphism $T$ of an elliptic complex, the Lefschetz number $\chi(T,G,D)$
is defined as the super trace of $T$ acting on cohomology defined by $D$ and $G$. 
It is equal to the sum $\sum_{v \in V} i(v)$, where $V$ is the set of zero-dimensional simplices
which are contained in fixed simplices of $T$, and $i$ is a Brouwer type index.
This Atiyah-Bott result generalizes the Brouwer-Lefschetz fixed point theorem for an endomorphism
of the simplicial complex $G$. In both the static and dynamic setting, the proof is done by heat 
deforming the Koopman operator $U(T)$ to get the cohomological picture 
${\rm str}(e^{-t D^2} U(T))$ in the limit $t \to \infty$
and then use Hodge, and then by applying a discrete gradient flow to the simplex data defining the valuation to push
${\rm str}(U(T))$ to the zero dimensional set $V$, getting curvature $K(v)$ or the Brouwer type index $i(v)$. 
\end{abstract}

\section{Simplicial complexes}

\paragraph{}
A {\bf finite abstract simplicial complex} is a finite set $G$ of
non-empty sets which is closed under the operation of taking finite non-empty 
subsets. A set $x \in G$ with $k+1$ elements is called a {\bf $k$-simplex} or $k$ dimensional
face; its {\bf dimension} is $k$. If $H \subset G$, we say $H$ is a {\bf sub-complex}. 
The set of sub-complexes of $G$ is a {\bf Boolean lattice} because both the 
union and intersection of a complex is a complex and the empty complex is a complex. 
We often just write {\bf simplicial complex} for a finite abstract simplicial complex. 

\paragraph{}
An example of a simplicial complex is the {\bf Whitney complex} of a finite simple
graph $(V,E)$. In that case, $G = \{ A \subset V \; | \;  \forall a,b \in A, (a,b) \in E \}$. 
The {\bf Barycentric refinement} $G_1$ of a complex $G$
is the subset $G_1$ of of the power set $2^G$ consisting of sets of sets in $G$, in which any pair
$a,b$ either satisfies $a \subset b$ or $b \subset a$. It is the Whitney complex of the
graph $(V,E)=(G,\{(a,b) \: | \; a \subset b \;{\rm or} \; b \subset a \}$). In topology,
one can therefore mostly focus on Whitney complexes of graphs which are more intuitive
than sets of sets. The Barycentric argument shows that almost nothing is lost by looking
at Whitney complexes of graphs.

\paragraph{}
Not all simplicial complexes are Whitney complexes. We can for example truncate a
given complex at dimension $d$, removing all sets of cardinality larger than $d+1$
to get the {\bf $d$-dimensional skeleton complex} of $G$. For a Whitney complex of a
graph, such a skeleton is no more a Whitney complex in general. Take $G=K_3$ for example
which is $G=\{ (1,2,3), (1,2),(2,3),(1,3),(1),(2),(3) \}$. The $1$-dimensional skeleton
is the subcomplex $H=\{ (1,2),(2,3),(1,3),(1),(2),(3) \}$ which is no more the Whitney 
complex of a graph. The subcomplex $H$ is a discrete circle with Euler characteristic $0$
while the complex $G$ itself is a two dimensional disc with Euler characteristic $1$. 
Complexes can appear in other ways also. The {\bf graphical complex}
of a graph consists of all non-empty forests in $G$, subgraphs for which 
every connected component is a tree.  As any non-empty subset of a forest is a forest,
this is a simplicial complex. More general {\bf graph complexes}, where the sets
are subsets of the edge set are considered in \cite{JonssonSimplicial}, which is also
a good introduction for abstract simplicial complexes. 

\section{Valuations}

\paragraph{}
Assume we are given a simplicial complex $G$. 
An integer-valued function $X$ from the Boolean lattice of sub-complexes to the integers 
is called a {\bf valuation} if $X(A \cup B) + X(A \cap B) =X(A) + X(B)$. Examples of 
valuations are $v_k(H)$ counting the number of $k$-simplices in $H$. We don't really insist
in general to have $X$ integer valued. One could assume $X$ to take values in an Abelian group.
According to the {\bf discrete Hadwiger theorem} \cite{KlainRota}, the
linear space of valuations of a complex is $(d+1)$-dimensional, where $d$ is the maximal 
dimension of the complex. A basis is $\{ v_k(G) \}_{k=0}^d$.
If $f(A)=(v_0(A),\dots, v_d(A))$ is the {\bf $f$-vector} of $A$, then 
$X(A) = \langle X,f(A) \rangle = \sum_{i=0}^d X_i v_i(A)$. 

\paragraph{}
An example of a valuation is the {\bf Euler characteristic} $\chi(G) = \sum_{x \in G} \omega(x)$ with 
$\omega(x)=(-1)^{{\rm dim}(x)}$ for a simplex $x$. It is invariant under Barycentric refinements 
and comes from the only eigenvector of eigenvalue $1$ of the transpose $S^T$ of the 
{\bf Barycentric refinement operator} $S_{ij}=i! {\rm Stirling}_2(j,i)$ mapping the $f$-vector of $G$ 
to the $f$-vector of its Barycentric refinement $G_1$. Other examples of valuations
are the number of $0$-dimensional points in $G$ or the {\bf volume}, the number of {\bf facets}, 
sets in $G$ of largest cardinality $d+1$ if $G$ has dimension $d$. Also
the other eigenvectors of the Barycentric refinement operator can be of topological interest.
For geometric graphs, discrete manifolds, in particular, half of the eigenvectors
lead to valuations which are zero. They are related to {\bf Dehn-Sommerville} invariants. 
On the other hand, a {\bf Betti number} $A \to b_i(A)$ is no valuation in general. 

\paragraph{}
A {\bf multi-linear valuation} $X$ is function of $G^k = G \times G \times \cdots \times G$
which is a valuation in each of the $k$ coordinates \cite{valuation}. Examples of {\bf bilinear valuations} are
$v_{kl}(G)$, counting the number of ordered pairs of simplices $(a,b)$ of dimension $k$ and $l$ which intersect.
By a generalization of the discrete Hadwiger theorem, if $G$ has dimension $d$, then
the valuations $v_{k,l}(G)$ with $0 \leq k \leq l \leq d+1$ form a basis for the linear space
of bilinear valuations. If $v_{kl}(G)$ is the symmetric {\bf $f$-matrix} encoding the intersection data, then every 
bilinear valuation can be written $X(A,B) = \sum_{i,j} X_{ij} v_{ij}(G)$, where $X_{ij}$
is a symmetric $(d+1) \times (d+1)$ matrix. Similar statements hold for any $k$-linear valuation.  

\paragraph{}
An example of a bilinear valuation is the {\bf Wu intersection number}  \cite{valuation}
$\omega(A,B) = \sum_{x \sim y} \omega(x) \omega(y)$, 
where the sum is over all ordered pairs of elements $x \in A, y \in B$ 
which intersect. The {\bf Wu characteristic} is then $\omega(G)=\omega(G,G)$. 
Wu characteristic has many important properties: it is invariant under Barycentric refinements, if $x$ is a simplex, 
then $\omega(x)=(-1)^{{\rm dim}(x)}$, and if $G$ is a discrete manifold with $(d-1)$ dimensional boundary, 
then $\omega(G) = \chi(G)-\chi(\delta G)$. 
Also higher order {\bf Wu characteristics} $\omega_k$ exist. The first one, $\omega_1$, is the Euler characteristic $\chi$, 
the second $\omega_2$ is the bilinear Wu characteristic. Each of these characteristic has its own 
differential complex and its own cohomology. This is useful, as now not only the valuations, but also the
Betti numbers are combinatorial invariants. Unlike simplicial cohomology which is invariant under homotopy,
the finer connection cohomologies related to $\omega_k$ are not.

\section{Calculus} 

\paragraph{}
Given a finite abstract simplicial complex $G$, equipped with an orientation of the simplices, one can look at
the linear space $\Lambda_k(G)$ of anti-symmetric functions on the set of $k$-simplices. The initial
choice of the orientation of $G$ is a basis selection. It is a gauge choice as custom in linear algebra which 
does not affect interesting quantities. 
The orientations on each simplex $x \in G$ do not have to be compatible so that every abstract simplicial complex
$G$ can be oriented. A complex can be called {\bf compatibly orientable}, if there is a choice of orientation which is 
compatible: if $x \subset y$ are simplices in $G$, then the orientation of $x$ is inherited from the orientation of $y$.
As in the continuum, examples like the M\"obius strip or Klein bottle show that not all complexes possess a 
{\bf compatible orientation}. But every simplicial complex can be oriented similarly as 
any vector space can be equipped with a basis. 

\paragraph{}
The space $\Lambda_k(G)$ is also called the space of {\bf discrete $k$-forms} on $G$. It has 
dimension $v_k(G)$. We think of each $\Lambda_k(G)$ as a fiber bundle over $G$. We can extend a 
form $f$ from sets in $G$ to sub-complexes of $G$ to get {\bf signed valuations} 
$f(A) = \sum_{x \in A} f(x)$ which still satisfy $f(A \cap B)  + f(A \cap B) = f(A) + f(B)$. 
Evaluating a signed valuation is {\bf integration} $f(A) = \int_A f$. 

\paragraph{}
The integration of signed valuations corresponds to {\bf geometric measure theory}, while the integration 
of valuations corresponds to {\bf geometric probability theory}={\bf integral geometry}. 
The former is orientation sensitive 
like line or flux integrals in school calculus. The later does not depend on the orientation 
and relates to integrations in calculus, like arc length or surface area. 
In the discrete, it can be important to be aware of the difference and distinguish 
integration of valuations and integration of signed valuations.

\paragraph{}
The {\bf exterior derivatives} $d_k: \Lambda_k(G) \to \Lambda_{k+1}(G)$ are linear 
transformations which extend to a linear transformation $d$ on the graded vector
space $\Lambda= \oplus \Lambda_k$, the vector space of {\bf discrete differential forms}. 
A differential form is just a function from $G$ to $\RR$ satisfying $f(T(x)) = {\rm sign}(T) f(x)$
for any permutation $T$ of the simplex $x$. The boundary operation $\delta$ maps
a sub complex $A$ to its {\bf boundary chain} $\delta A$, where the orientation of $\delta A$ 
is now compatible with the orientation of $A$. The image $\delta A$ of a complex
is not a complex any more in general. For $A=xu+yu+zu+x+y+z+u$ for example, then 
$\delta A = (u-x) + (u-y) + (u-z)=3u-x-y-z$. 
A signed valuation $f$ can be extended linearly to the group of chains on $G$ however. 

\paragraph{}
The exterior derivative $d$ defines what we will call a 
discrete {\bf elliptic complex} $D=(d+d^*):E \to F$: 
the non-zero eigenvalues of the Hodge operator $D^2$ restricted to the
space $E$ of even forms, are in bijective correspondence with the eigenvalues of $D^2$ 
restricted to the space $F$ of odd forms. When talking about an elliptic complex, we have three
things: a linear map $D=d+d^*$ incorporating all exterior derivatives $d_k$, where $d^2=0$, 
a domain $E$ and a co-domain $F$. The exterior derivatives $d_k$ can be rather general however. 
   
\paragraph{}
If $f$ is a signed valuation, {\bf Stokes theorem} is $f(\delta A)) = df(A)$.
For example, if $A$ is a two-dimensional connected sub-complex of $G$ equipped with a compatible
orientation and having the property that every 
unit sphere $S(x)$ in $A$ is either a circular graph $C_n$ with $n \geq 4$ or a linear
graph $L_n$ with $n \geq 2$, then $A$ is called a {\bf surface}. The boundary $\delta A$ is then
a {\bf curve}, a one-dimensional complex consisting of finitely many circular directed graphs.
Stokes theorem is then the classical Stokes theorem from 
school calculus. If $A$ is a sub-complex for which every unit sphere is either a $0$-sphere
(a 2-point graph without edges), or a 1-point graph $P_1$, then every connected component
has either $0$ or $2$ boundary points and $\int_A df = \int_{\delta A} f$ is the {\bf fundamental 
theorem of line integrals}. If $A$ is a sub-complex for which every unit sphere is a $2$-sphere
or a 2-disc, then the boundary $\delta A$ is a discrete closed 2-manifold, a complex for
which every unit sphere is a circular graph. In that case, Stokes theorem corresponds to 
the {\bf classical divergence theorem}.

\paragraph{}
Every differential complex $D=d+d^*$ defines a {\bf flavor of calculus}. In each case,
Stokes theorem is the defining relation for the {\bf boundary operation} $\delta$. 
The boundary $\delta x$ of a simplex $x$ is now no more given by a collection of simplices or chain.
It must be probed with functions $f(\delta x) = df(x)$. The boundary $\delta x$ can now be 
more extended unlike in classical or in {\bf connection calculus}, where in the quadratic case, 
the boundary $\delta$ acts on pairs of intersecting simplices as 
$\delta (x,y) = (\delta x,y) - (x,\delta y)$. 

\paragraph{}
A example where the boundary $\delta$ is no more in sync with the geometric boundary 
is if $D(t)=d_t+d^*_t+b_t$ is an isospectral Lax deformation $D'=[B,D]$
with $B=d-d^*$ of the Dirac operator $d+d^*$.  \cite{IsospectralDirac,IsospectralDirac2}
This is a deformation $d_t+d_t^*$ of a complex. 
The boundary $\delta_t A$ of a geometric object $A$ is now not a linear 
combination of simplices any more. 
Stokes theorem $f(\delta A)) = df(A)$ is still a definition. But now, 
a traditional line integral of $df$ along a closed loop is no more zero in general
as a closed loop can not be written as a linear combination of boundaries
of simplices any more. In the deformed calculus, fields have appeared. We see that if we let a
geometric object evolve freely in its isospectral set, then the geometry dynamically produces
sources for each space of differential forms. This works in the same way for
Riemannian manifolds, where the deformed exterior derivative $d(t)$ is a pseudo differential 
operator describing an expanding space.

\paragraph{}
One can speculate that the isospectral deformation of the
differential complex produces fields which are relevant in physics. Start with the classical
point of view of Gauss writing a force field $f$ as a {\bf Poisson equation} ${\rm div}(f) = -4\pi G \sigma$, 
where $\sigma$ is the mass density. If the field $f$ is a $2$-form satisfying $df=0$,
then $0=f(\delta A)$ for the {\bf deformed boundary} of a region $G$. For a {\bf classical boundary} $\delta_0 A$ 
of a ball $A$ however, the flux $f(\delta_0 A)$ is not necessarily zero. In the deformed differential complex, it 
appears as if some field $f$ has been generated from geometry alone. It would be good to explore
how the strength of this field depends on the geometry and especially on the curvature. 

\section{Differential complexes} 

\paragraph{}
A {\bf discrete differential complex} is defined as a sequence 
of linear maps $d: E_k \to E_{k+1}$ with $d_{k+1} d_k=0$ with
$E=\bigcup_k E_{2k}$ and $F=\bigcup_k E_{2k+1}$. To be more concrete, 
the finite dimensional vector spaces $E_k$ are required to be subspaces of tensor products
of the de-Rham complex $\Lambda_k$ or connection complex. We ask this so that the individual
fibers are local. A complex defines a {\bf Dirac operator} $D=\sum_i d_i + d_i^*$
with domain $E$ and co-domain $F$. In the case of the de Rham complex $\Lambda_k$, 
we can take $E$ the set of even forms and $F$ the set of odd forms. 

\paragraph{}
Given a differential complex, the analytic index of the Dirac operator $D:E \to F$.
is defined as ${\rm dim}({\rm ker}(D)) - {\rm dim}({\rm ker}(D^*))
={\rm dim}(E)-{\rm dim}(F)$. For example, for the connection complex of order $n$, for which $\Lambda_k(G)$ 
has as dimension the number of $n$-tuples of simultaneously intersecting simplices adding to 
dimension $k$, then the analytic index of $D$ is the $n$'th Wu characteristic. 
If $E=F=\Lambda(G)$, then the analytic index is zero as the kernel of $D$ and $D^*$
agree. 

\paragraph{}
{\bf Examples.} \\
1) Let $E$ be the linear space of even differential forms
and $F$ the linear space of odd differential forms.
The analytic index of $D=(d+d^*)$ is the Euler characteristic $\chi(G)$.  \\
2) Let $E=\Lambda_k(G)$ and $F=\{0\}$. Then, the analytic index of $D=d_k$ is $v_k$. \\
3) Let $E=\Lambda=\oplus_k \Lambda_{2k}$ and $F=\{0\}$ and $D=\sum_i a_{2i} d_{2i}$ 
with $a_i \neq 0$, $d_{i}=0$ for all $i$, then the analytic index of $D$ is $\sum_i v_{2i}$.
This is clearly not an elliptic complex.  \\
4) The analytic index $d_0+d_1$ mapping even forms $E=\oplus \Lambda_{2k}$
to odd forms $F=\oplus \Lambda_{2k+1}$ is $b_0-b_1$, if $G$ is equipped with the
1-skeleton simplicial complex. For $G=K_3$ with this complex $\chi(G)=0$ as the
two dimensional simplex does not count, the complex is a discrete circle. \\

\paragraph{}
If $A$ is a sub simplicial complex of $G$, we have a {\bf sub differential complex}
$D|A: E|A \to F|A$. 
Any subcomplex $A$ of $G$ with the same base $V=\bigcup_{x \in G} x$ has 
its own Euler characteristic $\chi(A)= \sum_{x \in A} \omega(x)$ which can
be written as $\sum_i (-1)^k b_i(A)$, where $b_i$ are the Betti numbers of the 
sub differential complex. 

\begin{lemma}
For any discrete differential operator $D:E \to F$, 
the map which assigns to a sub-complex $A$ the analytic index of $D|A$ is a valuation. 
Every linear or multi-linear valuation can be represented as an analytic index
of some differential complex. 
\end{lemma}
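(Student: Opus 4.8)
\section*{Proof proposal}

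The plan is to treat the two assertions separately, reducing each to bookkeeping about the combinatorial cells of the complex. For the first statement, I would begin from the identity ${\rm ind}(D|A) = {\rm dim}({\rm ker}(D|A)) - {\rm dim}({\rm ker}((D|A)^*)) = {\rm dim}(E|A) - {\rm dim}(F|A)$, which holds because ${\rm rank}(D|A) = {\rm rank}((D|A)^*)$ for any finite dimensional linear map, so that ${\rm dim}({\rm ker}(D|A)) - {\rm dim}({\rm ker}((D|A)^*))$ collapses to the dimension defect ${\rm dim}(E|A) - {\rm dim}(F|A)$. This replaces the spectral quantity by a purely dimensional one. Writing $E=\bigcup_k E_{2k}$ and $F=\bigcup_k E_{2k+1}$, it then suffices to show that $A \mapsto {\rm dim}(E_j|A)$ is a valuation for each $j$, since for a fixed complex the relevant valuations form a linear space and the index is the alternating sum $\sum_j (-1)^j {\rm dim}(E_j|A)$.

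The key point is locality. Since each $E_j$ is a subspace of a tensor product of de Rham or connection complexes, its restriction $E_j|A$ carries a distinguished basis indexed by those combinatorial cells of the complex whose every component simplex lies in $A$: a single oriented $j$-simplex in the de Rham case, or an ordered tuple of pairwise intersecting simplices in the connection case. Hence ${\rm dim}(E_j|A)$ is exactly the number of such cells contained in $A$. When the cells are single simplices, this count is a linear valuation: a simplex lies in $A \cup B$ iff it lies in $A$ or in $B$, and lies in $A \cap B$ iff it lies in both, so the additivity relation is ordinary inclusion-exclusion on the underlying sets of simplices. When the cells are tuples, as for the connection complex whose index is the Wu characteristic $\omega(A)$, the natural object is a valuation in each argument separately, and $A \mapsto {\rm dim}(E_j|A)$ is the corresponding multilinear valuation. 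I expect this to be the main obstacle: the literal single-argument restriction of a higher complex produces a quadratic, Wu-type object rather than a genuinely linear valuation, so one must read ``valuation'' in the broad linear-or-multilinear sense of the abstract and organize the counting coordinate-wise to expose additivity.

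For the converse, I would invoke the discrete Hadwiger theorem: the linear valuations are spanned by the face counts $\{v_k\}_{k=0}^d$, and by its multilinear generalization the $k$-linear valuations are spanned by the intersection counts $\{v_{i,j}\}$ and their higher analogues. It therefore suffices to realize each basis valuation as an index and then close under integer combinations. Each $v_k$ is realized by the trivial complex $E=\Lambda_k$, $F=\{0\}$ of Example 2, whose index is $v_k$; placing $\Lambda_k$ in $F$ instead realizes $-v_k$. For a general integer valuation $X=\sum_k X_k v_k$ I would take the direct sum of $|X_k|$ copies of $\Lambda_k$, routed into $E$ or $F$ according to the sign of $X_k$, with all differentials set to zero so that $d^2=0$ holds trivially; the resulting index is $\sum_k X_k v_k = X$. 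The identical device with the connection cells realizing the $v_{i,j}$ handles the multilinear case.

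The only genuine restriction in this second half is integrality: an analytic index is always an integer ${\rm dim}(E) - {\rm dim}(F)$, so a valuation with non-integer coefficients cannot literally be a dimension defect and would require passing to weighted or module-graded fibers. I would therefore state and prove the representation for integer-valued valuations, which is the case of interest, and record the weighted extension as a remark.
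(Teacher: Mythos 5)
Your proposal is correct, and its second half is essentially the paper's own construction: realize the basis valuations $v_k$ (and $v_{ij}$) by putting $\Lambda_k$ (respectively the space of functions on intersecting pairs) into $E$, setting all other spaces to zero and all differentials $d_j=0$, so that the index is the dimension count. You go one useful step further than the paper by handling general integer combinations $\sum_k X_k v_k$ via direct sums of $|X_k|$ copies routed into $E$ or $F$ according to sign, and by flagging honestly that this device only reaches integer-valued valuations; the paper realizes only the basis elements and is silent on both points.

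Where you genuinely diverge is in the first half, and your route is the sounder one. The paper's proof opens with the claim that $A \mapsto {\rm dim}({\rm ker}(d_i|A))$ is a valuation, and builds the index from these. That claim is false in general: ${\rm ker}(d_0|A)$ consists of the locally constant functions on $A$, so its dimension is $b_0(A)$, and the paper itself points out earlier that Betti numbers are not valuations (two arcs meeting in two points give $b_0(A\cup B)+b_0(A\cap B)=3 \neq 2=b_0(A)+b_0(B)$). Your argument sidesteps this entirely: by rank-nullity the index of $D|A$ collapses to ${\rm dim}(E|A)-{\rm dim}(F|A)$ \emph{before} any kernel is examined, and each ${\rm dim}(E_j|A)$ is a count of local cells contained in $A$, which is additive by inclusion-exclusion. (The paper states the rank-nullity observation elsewhere, in the paragraph following the lemma, but does not deploy it inside the proof.) You also correctly identify the one real subtlety that the paper's one-line proof glosses over: for a connection complex the single-argument restriction counts pairs $(x,y)$ with both simplices in $A$, which is a quadratic, Wu-type quantity rather than a linear valuation, so the first assertion of the lemma must be read in the linear-or-multilinear sense with the additivity checked coordinate-wise. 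That reading is consistent with the abstract and with the lemma's second sentence, and your proof is the one that makes the statement true as written.
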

\begin{proof}
The kernels $A \to {\rm ker}(d_i|A)$ of $d_i$ are valuations. So are the kernels of $d_i^*$ 
and the sums. To get a linear valuation $v_k(A)$ counting the number of $k$-dimensional
simplices in $A$, take $E_k=\Lambda_k(G)$ and all other $E_j=0$. Then let all $d_j=0$. 
Now, ${\rm dim} {\rm ker}(d_k|A) - {\rm dim} {\rm ker}(d_k^*|A) = {\rm dim}(E_k)=v_k(A)$. \\
To get $v_{ij}$ for example, let all $E_k$ be zero except $E_{i+j}$ 
which is the vector space of all functions on 
$(x,y)$ with ${\rm dim}(x)=i, {\rm dim}(y)=j$. Let all $F_k=\{0\}$. Let all $d_k=0$. 
Now, ${\rm ker}(D)-{\rm ker}(D^*)=v_{ij}$. 
\end{proof} 

\paragraph{}
The reason to focus on Fredholm indices rather then the nullity of the operator itself 
is that they have a chance of staying bounded in continuum limits
and also because $i(A B) = i(A) + i(B)$. In finite dimensions, the Fredholm indices 
is just $i(A)={\rm dim}(E)-{\rm dim}(F)$, independent of $A$. This follows from the 
{\bf rank-nullity theorem} and the fact that the row and column ranks of a 
finite matrix $A$ are the same.

\paragraph{}
In the discrete, Atiyah-Singer or Atiyah-Bott like results still have
some interest as we can equate both with cohomological data as well as topological data
with the valuation, at least  if the complex is elliptic. Classically, ellipticity is defined by
the {\bf symbols} of the differential operators. Instead of trying to translate
a continuum definition to the discrete, we have chosen to define 
ellipticity in the simplest way to have the proofs work. 

\paragraph{}
A {\bf differential complex} $(D,E,F)$ defined by maps 
$\Lambda_k \to^{d_k} \Lambda_{k+1}$ is called an
{\bf elliptic complex} if $L=(d+d^*)^2$ has the property that the spectrum of 
non-zero eigenvalues of $L$ on $E$ is the same than the spectrum
of non-zero eigenvalues of $L$ on the odd forms $F$. 
We wrote "spectrum" rather than "set" to stress that also the multiplicities of the 
eigenvalues have to be the same. The simplest proof of McKean-Singer \cite{McKeanSinger} 
relies on this symmetry \cite{Cycon} and can be adapted to the discrete
\cite{knillmckeansinger}. 

\section{Theorems}

\paragraph{}
The discrete version stated here only requires knowledge of finite sets
and finite matrices. It is a first attempt to emulate those classical 
theorems, risking of course to appear preposterous.

\paragraph{}
Given an elliptic differential complex $D:E \to F$ over a simplicial complex $G$. 
The {\bf analytic index} of $D$ is ${\rm dim}({\rm ker}(D))-{\rm dim}({\rm ker}(D^*))$.
The {\bf cohomological index} of $D$ is $\sum_i (-1)^k b_k(G,D,E,F)$, where $b_k$ are the
Betti numbers defined by the cohomology of $D$. The curvature of a pairwise intersecting
simplex tuple $x=(x_1, \dots, x_k)$ is $\prod_j \omega(x_j)$. The curvature of a vertex $v \in V$
is defined as $K(v)=(\sum_{x \in G^k(v)} i(x))/\sum_{x \in G^k(v)} 1)$, 
where both sum is over the set $G^k(v)$ of pairwise intersecting $k$ tuples 
$(x_1,\dots,x_k), v \in \bigcup x_j$. 
The {\bf topological index} is then defined as $\sum_{v \in V(G)} K(v)$.
In the Gauss-Bonnet case, $K(v) = 1 - \frac{V_0}{2} + \frac{V_1}{3} - \frac{V_2}{4} + \cdots$,
where $V_k(v)$ is the number of $k$-simplices in the {\bf unit sphere} $S(x)$ (often called link in the
simplicial complex literature). 
This formula \cite{cherngaussbonnet} appeared already in \cite{Levitt1992}. Almost a hundred
year old is the planar case where the curvature is $1-d(v)/6$ with vertex degree $d(v)$ appeared
in the context of graph colorings. 

\begin{thm}[Atiyah-Singer like]
The analytic index of $D$ is equal to the cohomological index and equal to the topological index.
\end{thm}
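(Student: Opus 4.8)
The plan is to run the discrete McKean-Singer argument first and then a Gauss-Bonnet type redistribution, matching the two equalities in the statement separately. I would introduce the Hodge operator $L=D^2$, which preserves the grading and hence splits into blocks acting on $E$ and on $F$, and consider the heat super trace $f(t)=\mathrm{str}(e^{-tL})=\mathrm{tr}(e^{-tL}|E)-\mathrm{tr}(e^{-tL}|F)$. Writing $f(t)=\sum_{\lambda}\bigl(m_E(\lambda)-m_F(\lambda)\bigr)e^{-t\lambda}$ with $m_E,m_F$ the multiplicities on $E,F$, the ellipticity hypothesis says $m_E(\lambda)=m_F(\lambda)$ for every $\lambda\neq 0$, so all nonzero eigenvalues cancel in the super trace and $f(t)=\dim\ker(L|E)-\dim\ker(L|F)$ is independent of $t$. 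Evaluating at $t=0$ gives $f(0)=\dim E-\dim F$, which by rank-nullity (as already noted in the text) is exactly the analytic index $\dim\ker(D)-\dim\ker(D^*)$.

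For the cohomological index I would take $t\to\infty$, where $e^{-tL}$ converges to the orthogonal projection onto $\ker(L)$. A discrete Hodge lemma for the complex $d:E_k\to E_{k+1}$ identifies $\ker(L|\Lambda_k)$ with the harmonic $k$-forms, a space of dimension $b_k(G,D,E,F)$ isomorphic to the $k$-th cohomology of $D$. Hence the constant value of $f$ equals $\sum_k(-1)^k b_k(G,D,E,F)$, the cohomological index. Combined with the previous paragraph this proves the first equality, analytic index $=$ cohomological index; note that the spectral symmetry is the only place ellipticity enters, exactly as the definition was tailored to allow.

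For the topological index I would pass to the static Gauss-Bonnet picture. Since $L$ is block diagonal, the analytic index is the purely combinatorial quantity $\dim E-\dim F=\sum_x i(x)$, the sum over all pairwise intersecting $k$-tuples $x=(x_1,\dots,x_k)$ of the weight $i(x)=\prod_j\omega(x_j)$, which is the order-$k$ Wu characteristic $\omega_k(G)$. The idea is to spread each tuple weight $i(x)$ equally over the vertices in $\bigcup_j x_j$ and then sum first over vertices; by Fubini the redistributed total is unchanged, while the mass accumulated at a single vertex $v$ is precisely the curvature $K(v)$ supported on the tuples of $G^k(v)$. Summing, $\sum_{v\in V}K(v)=\sum_x i(x)=$ analytic index. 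In the de Rham case $k=1$ a tuple is a single simplex, $i(x)=(-1)^{\dim x}$, the number of vertices is $\dim(x)+1$, and a simplex of dimension $j$ through $v$ is a $(j-1)$-simplex of the unit sphere $S(v)$; collecting terms recovers $K(v)=1-V_0/2+V_1/3-\cdots$.

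The two genuine technical points are the discrete Hodge identification $\ker(L|\Lambda_k)\cong H^k$, which I would get from the finite-dimensional orthogonal decomposition $\Lambda_k=\mathrm{im}(d_{k-1})\oplus\ker(L|\Lambda_k)\oplus\mathrm{im}(d_k^*)$, and the bookkeeping in the redistribution step. The main obstacle is the latter: for the connection complex the tuples overlap in complicated ways, so I must verify that dividing $i(x)$ by the number of vertices of the tuple and summing over $v$ returns each $i(x)$ exactly once, and that the resulting local quantity agrees with the stated $K(v)$. Everything else reduces to the spectral cancellation and a counting rearrangement.
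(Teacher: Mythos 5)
Your proposal is correct and follows essentially the same route as the paper: the McKean--Singer spectral cancellation makes $\mathrm{str}(e^{-tL})$ constant in $t$, evaluation at $t=0$ gives the analytic index via rank-nullity, the limit $t\to\infty$ together with the Hodge identification of $\ker(L|\Lambda_k)$ with cohomology gives the cohomological index, and the equal redistribution of each simplex (or tuple) weight to its vertices gives the topological index. The bookkeeping step you flag as the main obstacle is exactly the ``distributing each value equally to its vertices'' step the paper delegates to its Gauss--Bonnet reference, so your fleshed-out version adds detail but no genuinely different idea.
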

\begin{proof}
The super trace ${\rm str}(e^{-t D^2})$ is independent of $t$. For $t=0$
it is the super trace of $1$ which is the analytic index of $D$. 
Now apply the heat kernel deformation to make the Euler-Poincar\'e equivalence
to cohomological data. For $t \to \infty$, the non-zero eigenspace of $D^2$ dies
out and only the kernels survive. Since by Hodge, the dimensions of the kernels of $L$
restricted to $k$-forms is $b_k(G)$, the super trace in the limit $t \to \infty$ is
the cohomological index. \\
The topological index is obtained by pushing
the defining combinatorial data located on simplices to the zero dimensional part of space.
This can be done in various ways. The above definition of $K(v)$ does this by distributing
each value equally to its vertices \cite{cherngaussbonnet}.
An other extreme case is Poicar\'e-Hopf \cite{poincarehopf} which lets the curvature flow along 
the gradient of a function $f$ having the effect that curvature remains as integer index.
\end{proof}

\paragraph{}
Atiyah-Bott is a Lefschetz type result which relates a cohomologically defined Lefschetz
number with a sum of indices of fixed points of the endomorphism of an elliptic complex.
The proof in \cite{brouwergraph} for Whitney complexes works for general simplicial complexes. 
Simpler is a heat deformation approach. 

\paragraph{}
Let $T$ be an automorphism of the elliptic complex. The {\bf Lefschetz number} of $T$
is the super trace of the linear map induced on cohomology. Let ${\rm Fix}(T)$ be the 
set of elements in $G$ which are fixed under $T$ and let $V(T)$ denote the set
of vertices which contain an element in ${\rm Fix}(T)$. Define for $x \in {\rm Fix}(T)$ the
index $i(x)$ as $(-1)^{{\rm dim}(x)} {\rm det}(T|x) {\rm Tr}(D|x)$. The index
of $v \in V$ is then defined as $i(v) = \sum_{x \in G, v \in x} i(x)/({\rm dim}(x)+1)$. 

\begin{thm}[Atiyah-Bott like]
The Lefschetz number of $T$ is equal to the index sum over all fixed points.
\end{thm}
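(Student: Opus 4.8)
The plan is to run the same heat-kernel argument as in the Atiyah-Singer theorem above, but now with the Koopman operator $U(T)$ inserted into the super trace. Because $T$ is an automorphism of the complex, the induced pull-back $U(T)$ on forms commutes with each exterior derivative $d_k$, hence with $d$, with $d^*$, and with the Hodge operator $L=D^2$; this commutation is the structural fact that drives everything. With it in hand I would first show that $\mathrm{str}(e^{-tL}\, U(T))$ is independent of $t$. The argument is the McKean-Singer supersymmetry used to define ellipticity: on each nonzero eigenvalue $\lambda$, the operator $D$ is an isomorphism between the $\lambda$-eigenspace of $L$ on the even forms $E$ and the $\lambda$-eigenspace on the odd forms $F$. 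Since $U(T)$ commutes with $D$, this isomorphism intertwines the two restrictions of $U(T)$, so their ordinary traces coincide and cancel in the super trace. Only $\lambda=0$ survives, contributing a $t$-independent term, and because ellipticity matches the even and odd multiplicities of every $\lambda\neq 0$, the cancellation is exact.

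Next I would evaluate the two limits. As $t\to\infty$ the nonzero spectrum is exponentially damped and what remains is $\mathrm{str}(U(T)|_{\ker L})$. By Hodge, $\ker L$ restricted to each degree is the cohomology of $(G,D)$, so this limit is exactly the super trace of the map induced by $T$ on cohomology, which is the Lefschetz number. At $t=0$ we have $e^{-tL}=1$ and obtain $\mathrm{str}(U(T))=\sum_k (-1)^k \mathrm{tr}(U(T)|_{E_k})$. A diagonal contribution to this trace arises only from simplex tuples that are fixed setwise by $T$; a fixed simplex $x$ contributes the sign $\det(T|x)$ of the permutation $T$ induces on its orientation, together with the fiber factor $\mathrm{Tr}(D|x)$ coming from the local structure of $E_k$. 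Collecting the signs, the $t=0$ value equals $\sum_{x\in\mathrm{Fix}(T)} i(x)$ with $i(x)=(-1)^{\dim x}\det(T|x)\,\mathrm{Tr}(D|x)$.

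Finally I would redistribute the simplex-localized index to vertices. A simplex $x$ has $\dim(x)+1$ vertices, and $i(v)$ assigns to each vertex the share $i(x)/(\dim(x)+1)$, so summing $i(v)$ over $V(T)$ recovers $\sum_{x\in\mathrm{Fix}(T)} i(x)$ exactly. The established $t$-independence of $\mathrm{str}(e^{-tL}U(T))$ then equates the Lefschetz number, read off at $t\to\infty$, with $\sum_{v\in V(T)} i(v)$, read off at $t=0$, which is the assertion.

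I expect the main obstacle to lie in the bookkeeping at $t=0$: one must verify that the diagonal contribution of a fixed simplex in the general, possibly tensor-product or connection, complex is precisely $(-1)^{\dim x}\det(T|x)\,\mathrm{Tr}(D|x)$, with the correct normalization of the fiber factor $\mathrm{Tr}(D|x)$. A secondary care point is checking that the supersymmetric cancellation on the nonzero spectrum holds verbatim once $U(T)$ rather than the identity is present, which is exactly where the commutation $[U(T),D]=0$ is indispensable.
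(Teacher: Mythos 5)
Your proposal is correct and follows essentially the same route as the paper: the abstract and proof both rely on the $t$-independence of $\mathrm{str}(e^{-tD^2}U(T))$, reading off the Lefschetz number via Hodge at $t\to\infty$ and the fixed-simplex index sum at $t=0$, followed by the Gauss-Bonnet style redistribution of $i(x)/(\dim(x)+1)$ to vertices. You supply more detail than the paper's terse two-deformation sketch (in particular the commutation $[U(T),D]=0$ and the supersymmetric cancellation on the nonzero spectrum), but the underlying argument is the same.
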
 
\begin{proof}
Also here, there are two deformations: the first Euler-Poincar\'e deformation equates a
sum of fixed point indices with the Lefschetz number, the second,
the Gauss-Bonnet or Poincar\'e-Hopf deformation expresses this Lefschetz number
as an integral of curvature over space, where in the Poincar\'e-Hopf case, the curvature is a
divisor. Averaging the curvature over all locally injective functions gives the Euler curvature.
\end{proof}

\paragraph{}
In the elliptic case, the cohomological data
do not change when making a topological deformation like a Barycentric refinement.
The curvature data however change. This can be exploited for fixed point
results. The Atiyah-Singer or Atiyah-Bott theorems allow for more flexibility
as one can chose also the elliptic complex. The de Rham complex or the 
more general connection complexes are the guiding examples. 

\section{Remarks}

\paragraph{}
Both Atiyah-Singer and Atiyah-Bott are milestones in geometry
which require a decent amount of technical background in functional analysis,
differential geometry and topology \cite{PalaisSeminar,Shanahan}. Heat approaches have
been established in the continuum \cite{BerlineGetzlerVergne}, first by V.K. Patodi. 
The above results have much less structure
as they are defined for general abstract finite simplicial complexes and
don't assume that the geometries have any manifold type. 
The analytic index ${\rm dim} {\rm ker}(D)- {\rm dim} {\rm ker}(D^T)$ for a finite dimensional 
linear operator $D: E \to F$ is always just ${\rm dim}(E)-{\rm dim}(F)$ and so independent of $D$. 
So, if we look at a discrete analogue of an elliptic complex, then the analytic index is already 
the combinatorial quantity under consideration. 

\paragraph{}
Whether there is in the manifold case a refinement-
averaging procedure which produces the classical results, is not clear at the moment.
Especially the topological index has some flexibility still. The curvature should depend
naturally on the elliptic complex. If $D = R D_0 R^*$, where $D_0$ is the connection Dirac operator, then
the curvature can be obtained as the expectation of the Poincar\'e-Hopf indices $E[i_f]$
where the measure on the functions is the push forward of the uniform measure by $R$.
So at least in the case of a Hamiltonian deformation of the complex there is a natural way
to deform the curvature by deforming the measure on the space of functions and getting
curvature through a deformed expectation. 

\paragraph{}
The mathematics underlying the geometry of finite sets is about a century older
than Atiyah-Singer. Combinatorial versions hardly replace the continuum results
but this note could one day lead to a {\bf pedagogical entry point} into the topic. 
I personally still struggle to understand the Atiyah-Singer and the Atiyah-Bott index theorems. 
The theorems \footnote{We mean of course a treatment with full proofs.}
have not yet entered undergraduate courses. It will probably need an other half of a century to
achieve that. And this is important: to cite Atiyah from \cite{RS}: 
{\it "The passing of mathematics on to subsequent generations is
essential for the future, and this is only possible if every generation of mathematicians
understands what they are doing and distills it out in such a form that it is easily
understood by the next generation."}

\paragraph{}
The connection of discrete with the continuum needs still to be 
explored. Maybe the discrete case can in a limiting case become 
the continuum case. It is also possible that the discrete case
remains a {\bf caricature} \footnote{There is an SNL
sketch from 1992 with Tom Hanks of an ``prize is right" show, where a {\bf cordless telephone}
is sold to contestants. The deal was a traditional phone, from which the cord has been cut.}.
We think however that taking Barycentric limits combined with a suitable smoothing process
can lead to classical differential operators which are Fredholm 
and have a finite index. But there are other battles which need to be 
fought first in the discrete.

\paragraph{}
An other link between the discrete and continuum is integral geometry:
if $\gamma: [a,b] \to M, t \to r(t)$ is a smooth curve in a smooth connected manifold $M$,
let $L(\gamma)=\int_a^b |r'(t)| \; dt$ denote its length. Given a probability space
$(\Omega,P)$ of smooth functions $\omega$ on $M$ one can look at the random variable
$N_{\gamma}(\omega)$ counting the number of intersections
of surfaces $\{ \omega=0 \}$ with $\gamma$. Counting the number $N_{\gamma}$ of transitions
from $\omega \leq 0$ to $\omega>0$ defines a {\bf Crofton pseudo metric}
$d(x,y) = \inf_{\gamma(x,y), N_\gamma \in L^1(\Omega,P)} {\rm E}[N_{\gamma}]$,
where the infimum is taken over all curves connecting $x$ with $y$ with the understanding that
$d(x,y) = \infty$, if there should be no $\gamma$ for which $N_\gamma$ is in $L^1$.
The {\bf Kolmogorov quotient} $(M_P,d_P)$ consists of all equivalence classes of the 
equivalence relation given by $d(x,y)=0$. For discrete measures $P$, 
one gets like this discrete metric spaces and so finite simplicial complexes. 
Nash embedding $M$ into an ambient Euclidean space and taking a rotationally invariant
measure $P$ leads to the Riemannian metric on $M$ because it is the Eucliden metric
in the ambient space. As curvature in the discrete can be 
expressed integral geometrically \cite{indexexpectation,colorcurvature}, 
also Gauss-Bonnet type results should go over.  
Bridging the functional analysis of the Dirac and Laplace operators 
and the topological index both remain complicated tasks and as the continuum is 
technical, no real short cut might exist. 

\paragraph{}
Elliptic differential complexes can be added and multiplied and so
extended to a ring over a fixed simplicial complex $G$. As 
we can also look at the {\bf strong ring} generated by simplicial complexes,
there is an other possibility: extend the strong ring of simplicial 
complexes to a strong ring of differential complexes \cite{StrongRing}. 
Now it appears that not only the category of differential complexes 
over simplicial complexes but also the sub-category of elliptic 
differential complexes is a cartesian closed category. 

\vfill

\pagebreak

\bibliographystyle{plain}

\end{document}